%
\documentclass[a4paper,11pt]{amsart}
\usepackage{amsmath,amssymb,amsfonts}
\usepackage{graphicx}
\usepackage{xcolor}


\newtheorem{theorem}{Theorem}
\newtheorem{lemma}[theorem]{Lemma}

\newtheorem{problem}{Problem}

\newtheorem{defin}{Definition}

\begin{document}
\title[Hamilton Cycles in Random Lifts of Graphs]
{Hamilton Cycles in Random Lifts of Graphs}
%
%
\author{ Tomasz \L uczak \and \L ukasz Witkowski \and Marcin Witkowski}
\address{Faculty of  Mathematics and Computer Science,
Adam Mickiewicz University,
ul.~Umultowska 87, 61-614 Pozna\'n, Poland}
\thanks{The first author partially supported by NCN grant 
Maestro, while the third one is partially supported by 
NCN grant Preludium}
\maketitle              

\begin{abstract}
For a graph  $G$ the random $n$-lift of $G$ is obtained by replacing each of its vertices by a set of $n$ vertices, and joining a pair of sets by a random matching whenever the corresponding vertices of $G$ are adjacent. 
We show that asymptotically almost surely the random lift 
of a graph $G$ is hamiltonian, provided $G$ has the minimum degree 
at least $5$ and contains two disjoint Hamiltonian cycles whose union is not a bipartite graph.  
\end{abstract}

\section{Introduction}
The notion of a random lift was proposed by Amit and Linial \cite{pierwsza}
as a discrete version of the topological notion of covering 
maps, which are ``locally bijective'' homomorphisms. 
For graphs $G$ and $H$, a map $\pi:V(H)\rightarrow~V(G)$ 
is a {\em covering map} from $H$ to $G$ 
if for every $v \in V(H)$ the restriction of $\pi$ to 
the neighborhood of $v$ is a bijection onto the neighborhood 
of $\pi(v) \in V(G)$. 
In particular, for every vertex $v \in V(H)$ the degree 
of $v$ must be the same as the degree of $\pi(v)$. 
The set of all vertices which are mapped onto a vertex 
$v$ is called the \textit{fiber above $v$} 
and denoted by $\tilde{G}_v$. 
Since the term covering has been already widely 
used in graph theory, following Amit and Linial, we use the term
{\em lift} instead. For instance, we say that
$H$ from   the previous example is a lift of $G$. 
We often denote the lift of $G$ by $\tilde{G}$.

If for every vertex $v \in G$ 
the fiber $\tilde{G}_v$ has size $n$, 
then we call such a lift an \textit{$n$-lift}. 
We denote the set of all $n$-lifts of a given graph $G$ 
by $L_n(G)$ and call $G$  the \textit{base graph}. 
A random $n$-lift of $G$ is a graph chosen uniformly 
at random from the set $L_n(G)$. This is equivalent 
to associating with each vertex $u \in G$ 
a set $\tilde{G}_u$ of $n$ vertices and 
independently connecting each pair $(\tilde{G}_u,\tilde{G}_v)$ 
by a random matching whenever $u$ and $v$ are adjacent
in the base graph~$G$. Another way to describe 
this process is to take $\tilde{G}_v=\{v_1,...,v_n\}$ 
and $\tilde{G}_u=\{u_1,...,u_n\}$, choose uniformly 
at random one of the $n!$ 
permutations  $\sigma_{vu}:[n]\to [n]$, 
and connect $v_i$ with $u_{\sigma_{vu}(i)}$. 
Note that such permutations (or matching) are chosen independently 
for each edge $uv$ in $G$.

Our interest lies in the asymptotic properties of lifts of graphs, when the parameter $n$ goes to infinity. In particular, we say that a property holds {\em asymptotically almost surely}, or, briefly, {\em aas}, if its  probability tends to $1$ as $n$ tends to infinity. 
Sometimes, instead of 
saying that the random lift of $G$ has aas a property $\mathcal{A}$,
we write  that almost every random lift of a graph $G$ 
has $\mathcal{A}$. 

The first paper in the theory of random lifts of graphs dealt with their connectivity properties. Amit and Linial \cite{pierwsza} have proven that if $G$ is a simple, connected graph with minimum degree $\delta \geq 3$, then its random lift is aas $\delta$-connected. It was shown in \cite{Witkowski} that for graphs $G$ with $\delta(G) \geq 2k -1$ we have an even stronger property, namely a random lift of $G$ is aas $k$-linked. The term \textit{$k$-linked} refers to a graph with the property that for every $2k$ 
distinct vertices $s_1,s_2,\dots,s_k,t_1,t_2,\dots,t_k$ 
the graph contains $k$ vertex-disjoint paths $P_1,P_2,\dots,P_k$ such that $P_i$ connects $s_i$ to $t_i$, $1\leq i \leq k$.

Only a few other properties of random lifts have been studied, 
such as expansion properties \cite{druga}, matchings~\cite{czwarta}, 
and the independence and chromatic numbers~\cite{trzecia}. 
Here we consider the property that a graph contains a Hamilton cycle.
The two main problems concerning Hamiltonicity 
of random lifts, has been stated by Linial \cite{nati2} 
who asked the following two questions.

\begin{problem}
Is it true that for a given  $G$ the random lift 
$L_n(G)$ is either aas hamitonian,  or aas
non Hamiltonian?
\end{problem}
\begin{problem}
Let $G$ be a connected $d$-regular graph with $d \geq 3$. Is it true that random $n$-lift of $G$ is aas hamiltonian?
\end{problem}

Burgin, Chebolu, Cooper and Frieze \cite{frieze} proved the existence of a constant $h_0$, such that if $h \geq h_0$, then graphs chosen uniformly at random from $L(n,K_h)$ and $L(n,K_{h,h})$ are aas hamiltonian. 
Chebolu and Frieze~\cite{fr2} were able to expand this result to  appropriately defined random lifts of complete directed graphs. The main result of this paper goes as follows.

\begin{theorem}\label{thm:main}
Let $G$ be a graph with minimum degree at least five which contains at least two edge 
disjoint Hamilton cycles whose union is not a bipartite graph. Then aas $\tilde{G}\in L_n(G)$ is hamiltonian.
\end{theorem}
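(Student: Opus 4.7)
The plan is to reveal the random lift in two stages. First, expose the matchings corresponding to the edges of the first Hamilton cycle $C_1$; this gives a spanning $2$-factor $F_1 \subseteq \tilde{G}$. Because the lift of a $k$-cycle is described by composing $k$ independent uniform random matchings, and the composition of independent uniform random permutations is itself uniformly distributed on $S_n$, the cycle components of $F_1$ correspond bijectively to the cycles of a uniformly random permutation of $[n]$ (with each length multiplied by $k = |V(G)|$). The classical estimates on the cycle structure of a random permutation then imply that aas $F_1$ has $s = O(\log n)$ components $D_1, \dots, D_s$, with well-controlled length distribution.

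Second, expose the lift $F_2$ of the other Hamilton cycle $C_2$; since $C_1$ and $C_2$ are edge-disjoint in the base, $F_2$ is independent of $F_1$. I would then merge the components of $F_1$ into a single cycle by iterated two-edge switches. Given two current components $D, D'$ of the evolving $2$-factor, one looks for a pair of edges $e, e' \in F_2$ with endpoints in $V(D)$ and $V(D')$ such that replacing two suitably chosen adjacent edges of the current factor by $e, e'$ produces a single cycle on $V(D) \cup V(D')$. The hypothesis that $C_1 \cup C_2$ is not bipartite is essential here: a bipartite $C_1 \cup C_2$ would force a global parity on the endpoints of $F_2$-edges relative to the components of $F_1$, and it is precisely this parity that could leave all available switching pairs in the wrong configuration to perform the merge. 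Non-bipartiteness provides, via the lift of an odd closed walk in $C_1 \cup C_2$, the parity-breaking structure that makes the desired configurations available between any two large components.

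The main obstacle is the probabilistic control of this iterative merging. One has to show that for each of the $O(\log n)$ merges the conditional distribution of the still-unrevealed part of $F_2$ provides, with probability $1 - o(1/\log n)$, a valid switching pair between the two designated components. This requires a deferred-decisions argument: at each step only a small portion of $F_2$ is examined, and the remaining matchings retain enough conditional entropy to support the next switches. Carrying out this inductive conditioning analysis while still producing sharp estimates is, I expect, the bulk of the technical work. The assumption $\delta(G) \geq 5$ enters to provide additional, still fully random fiber-to-fiber matchings beyond those used for $F_1 \cup F_2$; these are exploited to repair pathological cases --- such as exceptionally short components $D_i$ or merges for which $F_2$ happens to supply no good switching pair --- by rerouting locally through edges of $\tilde{G}$ that are independent of everything revealed so far.
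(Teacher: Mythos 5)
Your opening step matches the paper exactly: expose $\tilde{H}_1$ first, observe that its cycle structure is that of a uniform permutation of $[n]$ (with lengths scaled by $|V(G)|$), and conclude that aas there are $O(\log n)$ basic cycles (Lemma~\ref{lemma_cycle}). After that, however, your plan diverges from the paper's and, as stated, has a real gap.

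The central problem is that merging the $O(\log n)$ components of $F_1$ by ``two-edge switches'' drawn only from $F_2 = \tilde{H}_2$ is far too rigid. Both $F_1$ and $F_2$ are periodic: every basic cycle of $F_1$ and every edge of $F_2$ respects the fiber structure, so a single edge of $F_2$ incident to a chosen vertex of a component $D$ lands in one prescribed fiber, and whether a switch with another component $D'$ exists is a very low-probability event in any one attempt. You would need to amplify the number of trial endpoints dramatically, and nothing in the two-$2$-factor picture provides that amplification. The paper resolves this by abandoning the restriction to $\tilde{H}_2$: it works with all of $\tilde{G}_1 = \tilde{G} - \tilde{H}_1$ and performs P\'osa rotations of a current long path. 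Here the hypothesis $\delta(G)\ge 5$ is not a repair mechanism for pathological cases as you suggest; it is the engine of the argument, because $\delta(\tilde{G}_1) = \delta(G)-2 \ge 3$ makes each vertex have at least three unrevealed $\tilde{G}_1$-edges, so the rotation process can be coupled below by a supercritical branching process (offspring $B(3,0.49)$) and one obtains $n^{3/5}\log^2 n$ distinct feasible endpoints for each half of the path with probability bounded away from $0$ (and hence whp over $\log^2 n$ independent tries).

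Your account of the role of non-bipartiteness is also not quite right. It is not a parity constraint on $F_2$-endpoints in the lift; it is a condition in the base graph that guarantees the existence of an $H_2\vec{H}_1$-alternating path between \emph{any} two base vertices (Lemma~\ref{lemSwitch}). The paper needs this because after the rotation/branching stage the two large sets $S_1$, $S_2$ of candidate endpoints are scattered over fibers, and to have a positive chance that an edge closes a long cycle one must steer a large portion of those endpoints into two \emph{adjacent} fibers $\tilde{G}_x$, $\tilde{G}_y$. The alternating path in the base graph dictates a constant-length sequence of P\'osa pivots that shifts an endpoint from any fiber to any other fiber, each step succeeding with constant probability, precisely because the current path still retains many unperturbed, correctly oriented runs of consecutive fibers. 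If $H_1\cup H_2$ were bipartite, such alternating paths would simply not exist between some pairs of vertices, and the fiber-adjustment step would fail. Note also that non-bipartiteness is definitely not necessary for the conclusion (random lifts of $K_{h,h}$ are Hamiltonian, \cite{frieze}); it is only what this particular rotation-and-switch argument needs.

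So, concretely, the steps missing from your outline are: (1) use all of $\tilde{G}-\tilde{H}_1$ rather than just $\tilde{H}_2$, and replace two-edge switches by P\'osa rotations; (2) a branching-process lower bound that amplifies the number of available path endpoints to polynomial size before attempting to close a cycle; and (3) a fiber-adjustment step via $H_2\vec{H}_1$-alternating paths, which is where the non-bipartiteness hypothesis is actually consumed. Without (2) and (3) the iterated merging has no reason to succeed with probability $1-o(1/\log n)$ per step, and the argument does not close.
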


This result together with some of its straightforward generalizations covers wide spectrum of graphs. 

The structure of the paper is the following. First we describe general properties of random lifts and the idea behind the algorithm which finds the Hamilton cycle in $\tilde{G}$. Then we present the algorithm. In the next section we show that asymptotically almost surely it succeeds in finding Hamilton cycle in $\tilde{G}$.

\section{Preliminaries}

We start with some general properties of random lifts that will be useful in proving the existence of Hamilton cycle in random lifts.

\begin{lemma}\label{lemma_cycle}
Let $h\ge 3$. Asymptotically almost surely a random $n$-lift of a cycle $C_h$ on $h$ vertices consists of a collection of at most $2 \log n$ disjoint cycles.
\end{lemma}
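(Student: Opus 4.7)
The plan is to reduce the statement to a classical fact about the cycle structure of a uniformly random permutation of $[n]$.

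First I would observe that since $C_h$ is $2$-regular, any lift $\tilde{C}_h$ is also $2$-regular, so $\tilde{C}_h$ is automatically a disjoint union of cycles; the only question is how many of them there are. Label the vertices of $C_h$ as $v_1,\dots,v_h$ (with $v_h$ adjacent to $v_1$), and let $\sigma_1,\dots,\sigma_h$ be the independent, uniformly random permutations of $[n]$ defining the matchings between the consecutive fibers $\tilde{G}_{v_i}$ and $\tilde{G}_{v_{i+1}}$ (indices mod $h$). Tracing the unique $2$-regular walk from a vertex $v_1^a$ around $C_h$ and back to $\tilde{G}_{v_1}$, the walk returns to $v_1^{\sigma(a)}$, where $\sigma = \sigma_h \circ \sigma_{h-1} \circ \cdots \circ \sigma_1$. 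Hence the cycles of $\tilde{C}_h$ are in bijection with the cycles of $\sigma$: each cycle of $\sigma$ of length $k$ corresponds to a single cycle of length $hk$ in $\tilde{C}_h$.

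Next I would note that since each $\sigma_i$ is uniform on the symmetric group $S_n$ and they are independent, the composition $\sigma$ is also uniform on $S_n$ (left-multiplication by an independent uniform element preserves the uniform distribution on any group). So the number of cycles in $\tilde{C}_h$ is exactly the number of cycles of a uniformly random permutation on $[n]$.

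Finally I would invoke the classical representation of the number of cycles $C(\sigma)$ of a uniform $\sigma \in S_n$ as a sum of independent Bernoulli random variables $X_1+\cdots+X_n$ with $\Prob(X_i=1)=1/i$, so that $\E C(\sigma) = H_n \sim \log n$ and $\Var C(\sigma) = O(\log n)$. By Chebyshev's inequality,
\[
\Prob\bigl( C(\sigma) > 2\log n \bigr) \;\le\; \frac{\Var C(\sigma)}{(\log n)^2} \;=\; O\!\left(\frac{1}{\log n}\right) \;\longrightarrow\; 0,
\]
which gives the desired bound. The main point to verify carefully is the bijection between cycles of $\tilde{C}_h$ and cycles of the composed permutation $\sigma$, and the observation that $\sigma$ is genuinely uniform on $S_n$; after that, the tail estimate is standard.
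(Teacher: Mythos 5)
Your proof is correct and follows essentially the same route as the paper: both reduce the cycle count of the lift to the cycle count of a single uniformly random permutation on $[n]$ (the paper by deleting one edge of $C_h$ and observing that the last matching is uniform, you by composing all $h$ matchings and noting the composition is uniform), and both then appeal to the classical distribution of cycle counts in a random permutation. The only difference is presentational -- you spell out the Feller representation and the Chebyshev tail bound where the paper simply cites Feller -- so there is nothing to flag.
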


\begin{proof}
If we remove one edge $e$ from a cycle $C_h$, then we obtain a path. It is easy to see that the lift of the path $P$ is a collection of $n$ disjoint paths. Lifting the missing edge $e$ is the same as matching at random the two sets of ends of those paths or connecting those ends according to some random permutation. The number of cycles created after joining those paths is then the same as the number of cycles in a random permutation on set $[n]=\{1,2,\dots,n\}$. The precise distribution of the number of cycles in random permutation is well known \cite{feller}. In particular aas 
the number of cycles in random permutation is smaller than $2 \log n$.
\end{proof}

Our algorithm will be based on 
the path reversal technique of P\'{o}sa~\cite{Posa}. 
Let $G$ be any connected graph and $P = v_0v_1...v_m$ be a path in $G$. If $1 \leq i \leq m-2$ and $\{v_m,v_i\}$ is an edge of $G$, then $P' = v_0v_1...v_iv_mv_{m-1}...v_{i+1}$ is a path in $G$ with the same vertex set as $P$. We call $P'$ a P\'{o}sa rotation of $P$ with the preserved {\em starting point} $v_0$ and the {\em pivot} $v_i$. 
Note that the edge $\{v_m,v_i\}$ we used for the transformation
into~$P'$ is not incident to its new end. By $\mathcal{P}_{q}(P,v_0)$ we denote the set of all paths of $G$ which can be obtained from $P$ by at most ${q}$ rotations preserving the starting point $v_0$.

Now we briefly comment on the main ideas behind our  algorithm (a more formal description of the procedure we postpone until the next section). For the clarity of argument from this point on symbols that corresponds to elements of base graph will be written in bold. Let $\textbf{G}$ be a connected graph on $k$ vertices with $\delta( \textbf{G} ) \geq 5$ which contains two edge disjoint Hamilton cycles $\textbf{H}_1$ and $\textbf{H}_2$. Choose any vertex $\textbf{h}_1$ and label each vertex twice according to its appearance in Hamilton cycles i.e. $\textbf{H}_1 = \textbf{h}_1\textbf{h}_2\dots \textbf{h}_{k}\textbf{h}_1$ and $\textbf{H}_2 = \textbf{h}'_1\textbf{h}'_2\dots \textbf{h}'_{k}\textbf{h}'_1$, where $\textbf{h}'_1=\textbf{h}_1$. 

Due to Lemma~\ref{lemma_cycle} aas the random lift of $\textbf{H}_1$, denoted by $\tilde{H}_1$, consists of disjoint cycles $C_1,C_2,...,C_\ell$, where $\ell \leq 2\log n$. We refer to these as \textit{basic cycles}. We will use the property that cycles in the lift preserve the order of vertices from the cycles in the base graph, i.e. for every cycle there exists $r\ge 1$ such that it can be written as 
\begin{equation}\label{cycle}
h^1_1h^1_2\dots h^1_{k}h^2_1h^2_2\dots h^2_{k}\dots
h^r_1h^r_2\dots h^r_{k}h^1_1,
\end{equation}
where $h^i_j$ is an element of the fiber $\tilde{G}_{h_j}$. 
 
Let $\textbf{G}_1=\textbf{G}-\textbf{H}_1$. 
Note that $\delta(\tilde{G}_1)=\delta(\textbf{G}_1)\ge 3$.
Our strategy will be rather natural. First we generate the lift $\tilde{H}_1$, next we try to merge cycles  $C_1,C_2,...,C_\ell$ into one long path using edges of $\tilde{G_1}$.  At the end we use the property that $\textbf{G}_1$ contains the Hamilton cycle $\textbf{H}_2$ to close the path into a cycle.

 Denote the longest cycle in $\tilde{H}_1$ by $C$. We shall try to connect $C$ to any other basic cycle in the lift using the edges of $\tilde{G}_1$. Once we succeed in finding connecting edge, we break the cycle $C$ and connect it to other basic cycle, a path created in this way will be denoted by $P$. Subsequently we  want to increase the length of $P$ by ``absorbing'' one basic cycle at a time. We shall do it by generating edges of $\tilde{G}_1$ which are incident to one of the ends of the path $P$. If we connect it to some basic cycle, say $C'_s$, then we replace $P$ by a longer path adding all vertices from $C'_s$, otherwise, either we try to connect the ends of $P$ to create a new cycle $C$, or try to replace $P$ by another path using  P\'osa transformation (in fact, since the probability that we extend $P$ 
is small, we 
use P\'osa transformations right away 
to produce a lot of path with the same vertex set as $P$ 
which became candidates for further extensions).
 If the obtained cycle $C$ is not a Hamilton cycle, then we try to merge $C$ with some of the the remaining basic cycles and repeat the procedure.
   
\section{The algorithm}

We generate a graph $\tilde{G}$ in each step of the algorithm edge by edge. First we generate the lift $\tilde{H}_1$, next at each point we choose for a given vertex $v$ its neighbours in $\tilde{G}_1$ at random one by one from all available candidates. 
Whenever we have already generated an edge from $\tilde{G_1}$ adjacent to a vertex $v$ we call such a vertex \textit{inactive}, vertices that are not inactive are called \textit{active}. We will denote the set of inactive vertices by $D$. 

In the analysis of the algorithm we shall show that asymptotically almost surely in order to merge $P$ with one of the remaining basic cycles we deactivates at most $5n^{4/5}$ vertices. Since at each iteration we connect one basic cycle to the cycle $C$ it would imply that in order to perform the whole procedure, we need to generate edges of $\tilde{G}_1$ incident to not more than $10n^{4/5}\log n  < n^{5/6}$ vertices. In the next chapter we show that in fact at the end of the algorithm execution aas we have $|D| <  n^{5/6}$.
We shall use the fact that $D$ is small quite often in 
order to guarantee that  each time we generate new edges incident with  an active vertex we have a lot of choices, i.e. 
this procedure is not affected much by the fact that some  
edges of $\tilde{G}_1$ have already been revealed. 

The algorithm consists of seven phases.

\smallskip

{\bf Phase 1 -- Cycle Lift} 

{\em Generate a lift $\tilde{H}_1$.  Assign $C$ to be the longest cycle in $\tilde{H}_1$.}

\smallskip

{\bf Phase 2 -- Cycle Merge} 

{\em 
Given a cycle $C$ and a set of basic cycles $C'_1, \dots, C'_s$
disjoint with $C$ do the following:
\begin{itemize}
 \item[A.]{If $0<\sum_{i=1}^s |V(C'_i)|< n^{9/10}$
take any vertex $v$ which belongs to a basic cycle $C'_1$ 
and generate edges of $\tilde{G}_1$ incident to it. If one of these edges $e$ connects $C'_1$ to $C$ assign to $P$ the unique  path 
containing $e$ with  vertex set is $V(C) \cup V(C'_1)$. 
Otherwise take another vertex $v' \in C'_1$ and repeat the operation.}

 \item[B.]{If $\sum_{i=1}^s |V(C'_i)|\ge n^{9/10}$ choose any $n^{1/3}$ active vertices of $C$ which are at distance at least $2$ from 
the set of all inactive vertices and generate edges of $\tilde{G}_1$ 
incident to them. If one of these edges $e$ connects $C'_i$ to $C$
assign to $P$ the unique  path 
containing $e$ with  vertex set is $V(C) \cup V(C'_1)$. 
 Otherwise repeat the operation.}
\end{itemize}
}

\smallskip

{\bf Phase 3 -- Path Merge} 

{\em Given a path $P$ and some basic cycles $C'_1, \dots, C'_s$, 
if any end of $P$ is connected to a basic cycle $C'_{i}$ 
replace $P$ by a new path  with vertex set $V(P)\cup V(C'_{i})$.}

\smallskip

{\bf Phase 4 -- Cloning Path} 

{\em Let us suppose we are given a path $P$, whose both ends are active, and a set of basic cycles $C'_1, \dots, C'_s$. 

Repeat the following actions:

Take $P=w_1w_2\dots w_t$ and apply to it repeatedly P\'{o}sa transformation preserving starting point $w_1$. Continue until $\log^2 n$ different paths starting at $w_1$ and ending at $w_{i_j}$, $j=1,2,\dots, \log^2 n$ will be found.
 Now reverse each of these paths and apply to each of them the transformation preserving point $w_{i_j}$. Continue to perform the operations until one of the conditions is true:

--- there is an edge connecting path $P \in \mathcal{P}_q(P,w_1)$ with some basic cycle $C'_{i_0}$,

--- there is an edge connecting path $P \in \mathcal{P}_{q'}(P_{i_j},w_{i_j})$ with some basic cycle $C'_{i_0}$,

--- there are $r=\log^2 n$ paths $P_1, \dots, P_r$ such that 
each of them has the same vertex set as $P$, and all $2r$ vertices which are ends of these paths are pairwise different and active.

In the case that one of the first two conditions is met go back to Phase~3, in the case that the third condition holds continue to Phase~5.}

\smallskip

{\bf Phase 5 -- Multiplying Ends} 

{\em For every path $P_1, \dots, P_r$ constructed in the Phase 4 split the vertex set $V(P_j)$ of $P_j$  into two roughly equal disjoint sets  $V_1, V_2\subset V(P_j)$, $|V_1|,|V_2|\ge (|V(P_j)|-1)/2$. Thus every path $P_j=w_1w_2 \dots w_t$ splits into two paths $P'_j= w_1w_2\dots w_{i-1}w_i$ and $P''_j=w_{i+1}w_{i+2}\dots w_t$,  where $i=\lceil t/2\rceil$.

At any point of the phase if there is:

--- an edge closing some path $P_j$ to form a cycle, then go to Phase~2,

--- an edge connecting $P_j$ with some basic cycle, then go to  Phase~3.

Repeat simultaneously for each path $P_1, \dots, P_r$: 

 Apply a series of P\'{o}sa transformations to the path $P'_j$ which preserve the starting point $w_{i}$ and a series of P\'{o}sa transformations to the path $P''_j$ which preserve starting point $w_{i+1}$. (We apply a single P\'{o}sa transformation to each of the paths in turn before we apply the next P\'{o}sa transformation).

Stop if for any path you find two sets $S_1\subset V_1$, $S_2\subset V_2$, such that $|S_1|,|S_2|\ge n^{3/5}\log ^2 n$ with the following property:
\begin{quote}
For every $x\in S_1$ and $y\in S_2$ there is a path $P_{xy}$ of length $|P_j|$ which starts at $x$ ends at~$y$ whose first $|V_1|$ vertices are those from $V_1$ and last $|V_2|$ vertices are those from~$V_2$.
\end{quote}
 }
{\bf Phase 6 -- Adjusting} 

{\em  Choose any edge  $\{\textbf{x,y}\}$ from $\textbf{G}\setminus (\textbf{H}_1\cup \textbf{H}_2)$. Use at most $|G|(|S_1|+|S_2|)$ P\'{o}sa transformations to switch the end $w_1$ of the path $P'$ and the end $w_t$ of the path $P''$ to replace the sets $S_1$, $S_2$ generated in the previous stage by slightly smaller sets $S'_1\subset V_1$, $S'_2\subset V_2$, $|S'_1|, |S'_2|\ge n^{3/5}$, such that $S'_1$ is contained in the fiber $\tilde{G}_x$ and $S'_2\subset \tilde{G}_y$. 
}

{\bf Phase 7 -- Closing a cycle}

{\em Generate edges between $\tilde{G}_x$ and $\tilde{G}_y$ incident to vertices from $S'_1$. If one of them has an end in $S'_2$ then  STOP if the resulted cycle is a Hamilton cycle, or otherwise go to Phase 2.}

\section{The analysis of the algorithm}

In this section we show that aas the algorithm returns a Hamiltonian cycle and, consequently, Theorem~\ref{thm:main} follows. We do it by showing that each phase of the algorithm  will be 
successfully completed with probability sufficiently close to~1.

\smallskip

{\bf Phase 1.}
We start the analysis of the algorithm with Phase 1. As already mentioned, Lemma~\ref{lemma_cycle} states that the random lift of $H_1$ asymptotically almost surely consists of disjoint cycles $C_1,C_2,...,C_\ell$, where $\ell \leq 2\log n$. Note that this means that the length of the longest cycle $C \in \tilde{H_1}$ is at least $n/(2\log n)$. Observe that since the number of basic cycles is bounded from above by $2\log n$,  Phases~2 and~3 can be invoked at most $2\log n$ times. 

We shall show that with probability at least $1-o(1/\log n)$ during Phases $2$-$7$ we create a cycle $C$ each time deactivating fewer that $5n^{4/5}$ vertices. 
Thus, at the end of the Algorithm execution we will have 
$|D| \le 10n^{4/5}\log n \leq n^{5/6}$.  This bound, which 
states that to build a cycle we need only to generate a small portion of the random $n$-lift,  shall be used
very often  in the analysis of the algorithm 

Note that in any step in which we deactivate a vertex either it is already in $P$ or we have just added it to $P$. 
Consequently, all vertices outside $P$ are active. Moreover since at every point of the algorithm we want the ends of path $P$ to be active vertices, we perform P\'{o}sa rotation only in the case when the new end of $P$ is an active vertex. Notice that this happen to be true whenever in the rotation the pivot is at distance at least $2$ from any inactive vertex.

{\bf Phase 2.} 
In this step we want to connect cycle $C$ with any basic cycle disjoint with it, creating a long path $P$. As stated above we require that vertices which connect those two cycles are not adjacent to any inactive vertices.

In case A the total number of vertices in the remaining basic cycles 
which are yet to be joined to $C$ is smaller than $n^{0.9}$. The probability that a vertex from the basic cycle $C'_1$ has a neighbour inside $C$ which is at distance at least 2 from any inactive vertex is larger than $$\frac{n-n^{9/10}-\Delta(G)\cdot |D|}{n - |D|} \geq 1-\frac{2n^{9/10}}{0.9n} = 1-o(1/\log n),$$ since we need to exclude vertices outside $C$ together with all inactive vertices and their neighbours. Hence, with probability $1-o(1/\log n)$, the merging deactivates only one vertex and we do not need to repeat the procedure more times. 

For case B note that since $|C|\ge n/(2\log n)$ and there is always fewer than $n^{5/6}$ inactive vertices, one can greedily select $n^{1/3}$ vertices of $C$ which are at distance at least 2 from any inactive vertex and from each other. Clearly, the probability that some of these vertices is adjacent in $\tilde{G}_1$ to one of the basic cycles is bounded from above by 
$$ 1 - \left( \frac{n- n^{9/10}}{n} \right)^{n^{1/3}} \leq 1- \left( 1-n^{-1/10}\right)^{n^{1/3}}=1-o(1/\log n).$$
Again  with probability $1-o(1/\log n)$, we succeed with first set of $n^{1/3}$ vertices and we do not need to repeat the procedure. 

 Altogether each time we invoke this phase 
with probability at least $1-o(1/\log n)$ 
 we deactivate at most $n^{1/3}$ vertices. 

{\bf Phase 3.}  

We do not generate any edges in this step, thus we do not deactivate any vertices. 

{\bf Phase 4.}  
Let $P=w_1,....,w_t$. Our aim is either to find an edge of $\tilde{G}_1$ joining one end of a path $P' \in \mathcal{P}_q(P,w_1)$, or $P'' \in \mathcal{P}_{q'}(P_{i_j},w_{i_j})$, to one of the cycles outside $P$ and go to Phase 3, or to find for $r = \log^2 n$ a set of paths $P_1, \dots, P_r$ such that each of them has the same vertex set as $P$, and all $2r$ vertices which are ends of these paths are different and active.

There are two stages in this phase. First we take path $P$ and find a set of $r$ paths which start at $w_1$ and whose $r$ ends are distinct and active. Notice that after any P\'{o}sa transformation we want the new end to be active so we require that the pivot $w_i$ has no inactive neighbours. Thus we need to estimate the probability that in any of the $\log^2 n$ possibly required P\'{o}sa transformations the new end of our transformed path either is connected to a vertex which is at distance less than $2$ to any inactive vertex, or is the neighbour of one of the ends of previously generated paths. This probability can be crudely bounded above by 

$$\log^2 n \;\frac{(\Delta(G) \cdot |D| + \log^2 n)}{n-|D|} \leq \frac{\Delta(G)\cdot \log^4 n \cdot n^{5/6}}{0.9n} = o(1/\log n)\,.$$

In the second stage we take all paths $P_1,...,P_r$ and apply to them the P\'{o}sa transformations preserving the ends chosen in the first stage. At this time the structure of each path is distinct, so in the process of applying consecutive transformations we might get different results for each path. Moreover we want those new ends to be different from the ends generated in previous stage. Thus we take the first path $P_1$ and apply transformations in order to generate a set of $\log^2 n$ active ends for it and choose one of them as the end of $P_1$. Then we take path $P_2$; if it admits the same transformations as $P_1$, then we select one of the vertices generated for $P_1$, which has not already been taken, as the end for $P_2$. In the opposite case we apply P\'{o}sa transformations for $P_2$ and generate a new set of $\log^2 n$ ends for it. We repeat the same operations for all other paths. Notice that in the worst case scenario we need to make at most $\log^4 n$ single transformations in total. Similarly to the previous case the probability that in any of $\log^4 n$ required P\'{o}sa transformations the new end of our transformed path is 
either connected to a vertex which is within distance $2$ to any inactive vertex, or was the neighbour of the end of one of the previously generated paths, is bounded from above by 

$$\log^4 n \;\frac{(\Delta(G) \cdot |D| + \log^4 n)}{n-|D|} \leq \frac{\Delta(G)\cdot \log^8 n \cdot n^{5/6}}{0.9n} = o(1/\log n)\,.$$

Note also that each time in this phase we have deactivated at most $\log^2 n + \log^4 n \leq 2 \log^4 n$ vertices.

{\bf Phase 5.}   
Let us recall that, roughly speaking, in this phase we want to take any of the paths $P_j~=~w_1w_2\dots w_t$ constructed in the previous case, split it into two halves $P'=w_1w_2\dots w_{i-1}w_i$ and $P''=w_{i+1}w_{i+2}\dots w_t$, where $i=\lceil t/2\rceil$, and apply to them transformations preserving respectively $w_{i}$ and $w_{i+1}$ in order to find at least $n^{3/5}\log^2 n$ new feasible ends for each of them.
 
We show that the probability that we succeed in doing it for 
a given path is bounded away from zero, by some constant $\alpha>0$. Thus if we repeat this for $\log^2 n$ paths, then with probability $1-o(1/\log n)$ for at least one of them we expand the set of feasible ends to the required size.

The existence of a constant $\alpha>0$ follows easily from the theory of branching processes (see \cite{harris}). Indeed, take one path, say $P'$, and first generate all its possible ends using the transformation preserving the end $w_i$ (this will be the first generations of ends), then apply consecutive transformation to obtained ends in order to get the second generations of ends, and so on. In each step we generate at least three new edges (since the minimum degree of $\tilde{G}_1=\tilde G-\tilde H_1$ is at least 
three) and  we fail if we choose in such a trial either a vertex from the other path $P''$, or a vertex which is adjacent to inactive vertex or one of the ends chosen so far. Since $|P''|\le n/2+1\le 0.501n$, the probability of making a bad choice is in each step bounded from above by 
$$\frac{0.501n+\Delta(G)|D|}{n-|D|}\le 0.51.$$
Consequently, the  number of successful choices (i.e. the ones which either lead to a new end or allow us to go to Phase 3) in one round is stochastically bounded from below by the binominally distributed random variable $B(3, 0.49)$. 

Thus, let us recall, we treat the process of applying consecutive 
P\'osa transformations as a branching process. Since every active vertex $v$ has at least $3$ edges in $\tilde{G}_1$ which are still to be revealed, the possible number of descendants for each ancestor is bounded from below by $3$. The probability of producing new individual in the next generation equals the probability that the generated edge connects $v$ with a vertex of $P'$ which neither is  adjacent to an inactive vertex, nor is a vertex generated in previous steps. Since the number 
of inactive vertices is at most $n^{5/6}=o(n)$ 
and clearly $|P''|\le 0.501n$, the process of generating feasible ends for the path $P$ can be 
stochastically bounded from below by the branching process defined by a variable with binomial distribution $B(3,0.49)$. 

 Since $3*0.49>1$, by theory of branching processes \cite{harris} we know that with probability $\beta > 0.61$ the branching process will not die out. Furthermore, a standard large deviation argument (cf. \cite{JLR}, Chapter 5.2) shows that with probability at least $1-2\exp(-n^{3/5})$ the first time we get $n^{3/5}\log^2 n$ vertices in one generation the total number of offspring is bounded from above by $5n^{3/5}\log^2 n$. Consequently, with probability at least $\beta/2$, after using at most $5n^{3/5}\log^2 n$ P\'osa transformations we either merge the end of $P'$ with one of basic cycles (and so go to Phase 3) or generate at least $n^{3/5}\log^2 n$ different active ends for this path. Hence, the probability that it happens at the same time for $P'$ and $P''$ is bounded from below by $\alpha=(\beta/2)^2$. 
  
However,  at the beginning the previous phase of the algorithm provided us not one, but $\log^2 n $ paths with different ends. Consequently, with probability $$1-(1-\alpha)^{\log^2 n}=1-o(1/\log n)$$ we succeed in expanding the set of feasible ends for at least one of the paths. Hence, with probability at least $1-o(1/\log n)$ this phase of the algorithm can be completed with the total number of deactivated vertices bounded from above by 
$\log^2n \cdot 5\Delta(G)n^{3/5}\log^2n\le n^{4/5}$.

{\bf Phase 6.}
The sets $S_1$ and $S_2$ found in the previous phase are such that each edge connecting them 
creates a cycle. Such a cycle is either a Hamilton cycle or can be merged to some remaining basic cycles (back in the Phase $2$). 

Note however that vertices in $S_1$ and $S_2$ are spread over fibers of $\tilde{G}$. 
In particular, it might happen that sets $S_1$ and $S_2$ are placed in two fibers which corresponds to non-adjacent vertices of $\textbf{G}$ and so we cannot expect them to be connected by an edge. 
 Let $\{\textbf{x,y}\} \in E(\textbf{G})$. In this phase we want to use the property that $\textbf{G}_1$ contains second Hamilton cycle $\textbf{H}_2$, to ``switch'' a large part of the sets $S_1$ and $S_2$ 
 to the  fibers $\tilde{G}_x$ and $\tilde{G}_y$, respectively. 

Let $P'=w_1w_{2}\dots w_i$ be defined as ``the half'' of the path we have dealt with in the previous phase, and let $w_1\in S_1$.
We would like to argue that, with probability bounded away from zero by some constant $\gamma>0$, we can deactivate at most $|\textbf{G}|$ vertices in order to either connect $P'$ to some remaining basic cycle, or turn $P'$ by a sequence of Posa transformations preserving the end $w_i$ into a path with the end on the chosen fiber $\tilde{G}_x$. 

Let us recall first that $P'$ has been obtained in the  process of merging
and transforming basic cycles obtained in the first phase. Each of the basic cycles has a periodic structure (see (\ref{cycle})), which implies that they are evenly distributed across the fibers of the lift.
Let $k=|\textbf{G}|$ where, let us recall, $k$ is a constant which does not grow with $n$. In the case when the length of $P'$ is smaller than $n/3$ the total length of basic cycles outside $P'$ and $P''$ is $m\ge n/3$ and furthermore each fiber contains precisely $m/k$ vertices which belong to basic cycles outside $V(P')\cup V(P'')$. Consequently, with positive probability 
(at least $m/(nk)\ge 1/(3k)$) we merge the end of $P'$ with a basic cycle deactivating just one vertex.

Let us consider now the more challenging case, when $P$ is very long and the length of $P'$ is at least $n/3$. We are interested in the structure of the path $P'$, namely to what extent it preserves the structure of basic cycles. Whenever we joined two cycles or performed a P\'osa transformation we perturbed the cyclic distribution of vertices. More precisely, a single merge or transformation could spoil at most three of sequences $...h^i_1h^i_2\dots h^i_{k-1}h^i_kh^{i+1}_1...$ which occur in the path~$P$.  Note that after 
such a transformation the order of the vertices in the sequence is reversed in part of the path (see Figure~1).

\begin{figure}[h!]
\label{rr}
  \centering
    \includegraphics[width=\textwidth]{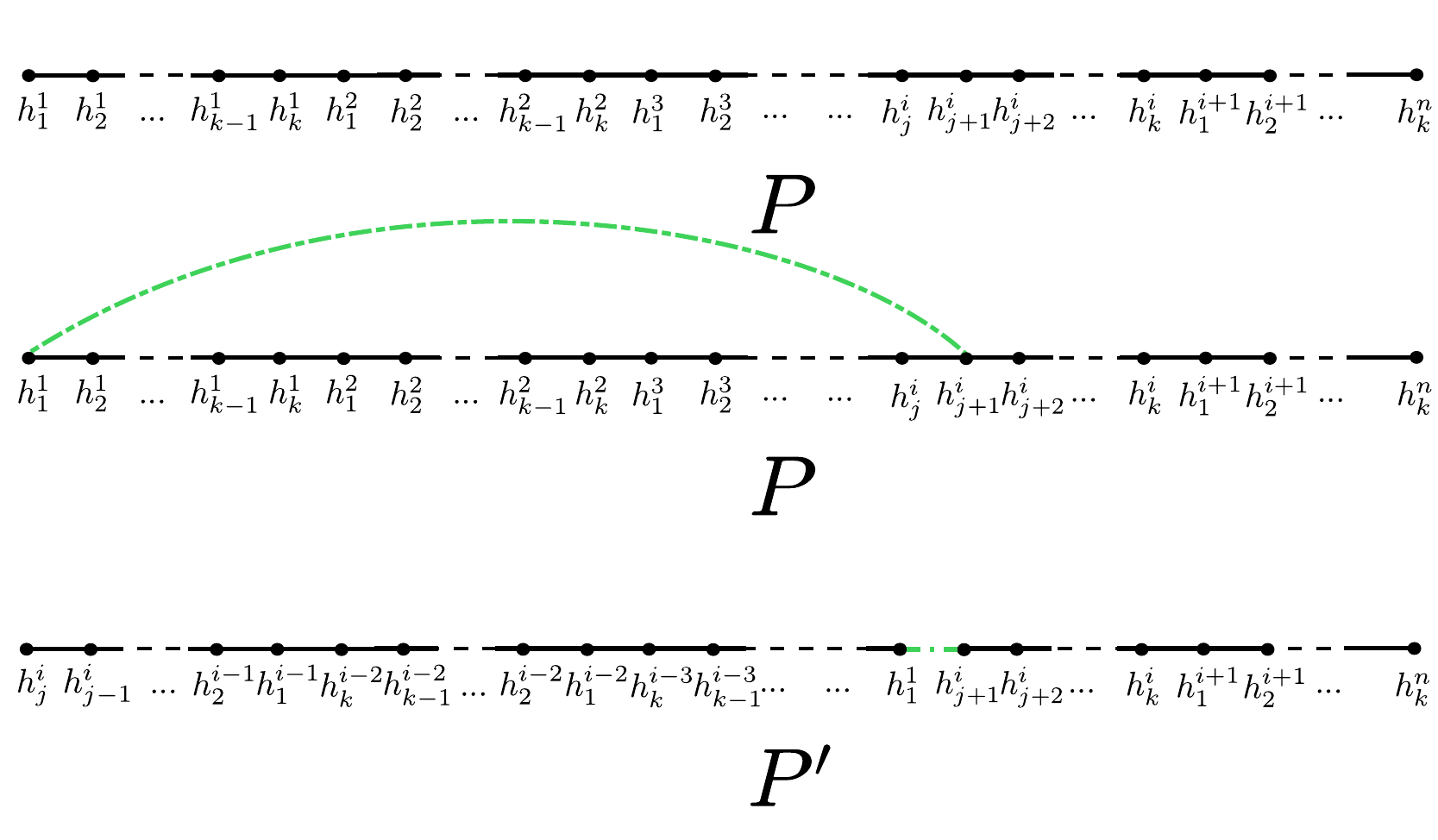}
      \caption{The path $P$ 
			above consists of sequences of vertices from a path which is a part of a lift of Hamilton cycle  
 $\textbf{H}_1 = \textbf{h}_1...\textbf{h}_k$, where by $h_i^j$ we denote a vertex from the fiber above $\textbf{h}_i$. After P\'osa transformation with the pivot $h^i_{j+1}$ we get a new path~$P'$. Notice that $h^i_1...h^i_kh^{i+1}_1$ is no more a sequence of consecutive vertices from fibers above $\textbf{H}_1$. Moreover sequences   $h^1_1...h_1^{i-1}$ are reversed in the path $P'$.}
\end{figure}

 Observe that the number of joins and transformations made to a path $P'$ is bounded by the number of inactive vertices.
Since during the algorithm we deactivate at most $n^{5/6}$ vertices, there are at least $(n/3k)-3n^{5/6}>2n/(7k)$ sequences of consecutive vertices which belong to fibers given by the order of vertices in $\textbf{H}_1$. Some of the sequences could get reversed in the transformations (see Figure~1), but at least half of them, i.e. at least $n/(7k)$, are sequences of consecutive vertices appearing in the order $\textbf{h}_1\dots \textbf{h}_{k-1}\textbf{h}_k\textbf{h}_1$ or $\textbf{h}_{1}\textbf{h}_{k}\textbf{h}_{k-1}\dots \textbf{h}_2\textbf{h}_1$. In the former case we say that the orientation of the sequence is  positive, in the latter one we say that it is  negative. 

Thus, let us choose $n/(7k)$ sequences with the same orientation. We subdivide $P'$ into $k-1$ connected sections, such that each of them contain at least $z = n/(8k^2)$ sequences of the same orientation and denote those sections as $Q_1,...,Q_{k-1}$. See Figure~2 for an example.

\begin{figure}[h!]
  \label{rysunek2}
  \centering
    \includegraphics[width=\textwidth]{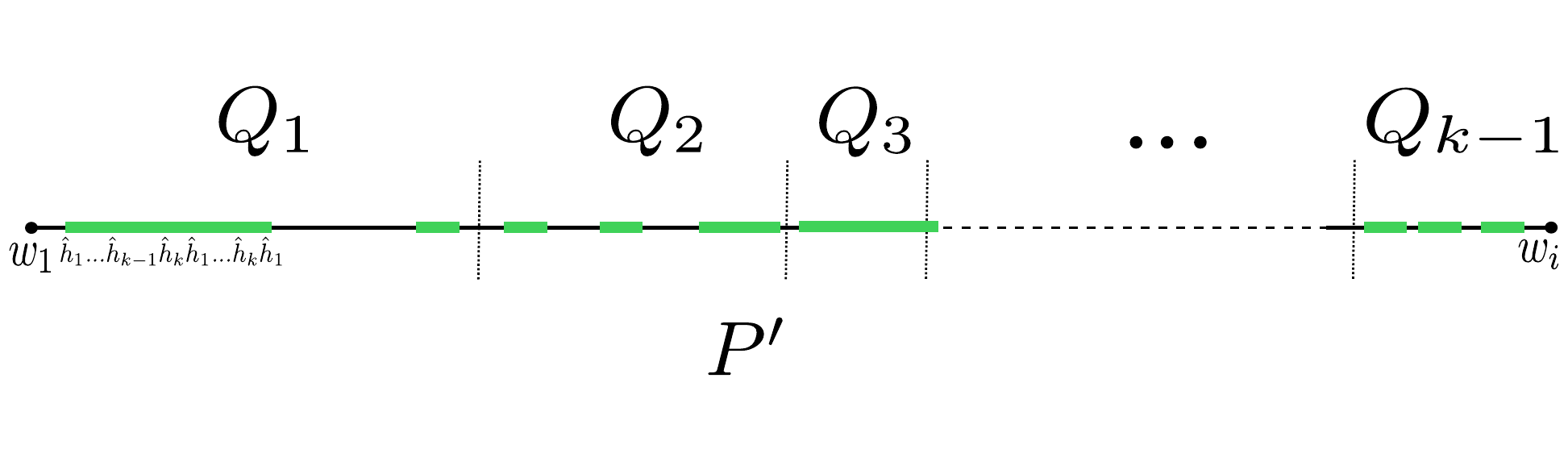}
      \caption{A path $P'$ divided into sections $Q_1,...,Q_{k-1}$. By $\hat{h}_i$ we denote that vertex is an element of the fiber above $\textbf{h}_i$. Green segments indicate sequences of vertices which belong to fibers given by order of vertices in $\textbf{H}_1$ (there could be more than one sequence in one segment).}
\end{figure}

Let $\vec{\textbf{H}}_1$ be a directed cycle created by orienting edges of $\textbf{H}_1$ in one direction. 
Let us recall that $\textbf{H}_2=\textbf{h}'_1\textbf{h}'_2\dots \textbf{h}'_{k}\textbf{h}'_1$ is the second Hamiltonian cycle in the base graph $\textbf{G}$ which does not share any 
edges with~$\textbf{H}_1$. In the definition below we treat each undirected edge of $\textbf{H}_2$ as a pair of edges with opposite orientations.

\begin{defin}
A directed path $P$ in $G$ is called $H_2\vec{H}_1$-alternating if it starts by an edge of $H_2$, ends with an edge of $\vec{H}_1$ and its edges belong alternatively to Hamilton cycle $H_2$ and directed cycle $\vec{H}_1$.  
\end{defin}

The proof of the following lemma can be found in the next section of this paper.

\begin{lemma}
\label{lemSwitch}
Let $H_1$ and $H_2$ be edge disjoint Hamilton cycles such that $H = H_1 \cup H_2$ is a non-bipartite graph, then for every pair of vertices $v,u \in H$ there exists a $H_2\vec{H}_1$-alternating path from $v$ to $u$.
\end{lemma}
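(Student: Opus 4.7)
The plan is to reduce the claim to strong connectivity of an auxiliary digraph $D$, and to derive strong connectivity from non-bipartiteness. Let $s$ denote the successor along $\vec{H}_1$, and put an arc $a\to b$ in $D$ whenever $a$ and $s^{-1}(b)$ are joined by an edge of $H_2$; equivalently, every $H_2$-edge $\{u,w\}$ contributes the two arcs $u\to s(w)$ and $w\to s(u)$. A $H_2\vec{H}_1$-alternating walk $v=v_0 v_1 v_2\cdots v_{2k}=u$ with $v_{2j}v_{2j+1}\in H_2$ and $v_{2j+1}v_{2j+2}\in\vec{H}_1$ corresponds exactly to a directed walk $v\to v_2\to\cdots\to v_{2k}=u$ in $D$, and conversely. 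So the lemma reduces to the strong connectivity of $D$. Since every vertex has two $H_2$-neighbours and $s$ is a bijection, $D$ has in- and out-degree exactly two at every vertex.

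I will use the standard balance principle for $d$-in, $d$-out digraphs: if no arc enters a set $A\subseteq V$ from its complement, then no arc leaves it either. Applied to the vertex set of a source strongly connected component, this shows that if $D$ fails to be strongly connected, we can partition $V=V_1\cup V_2$ with both parts non-empty and no arc of $D$ crossing the partition. Writing $\chi(v)\in\{1,2\}$ for the side of $v$, the non-crossing condition on the two arcs coming from each $H_2$-edge $\{u,w\}$ yields $\chi(u)=\chi(s(w))$ and $\chi(w)=\chi(s(u))$. Call $v$ \emph{flipping} if $\chi(s(v))\neq\chi(v)$ and \emph{stable} otherwise; a short case split on whether $\chi(u)=\chi(w)$ shows that the two endpoints of every $H_2$-edge are simultaneously stable or simultaneously flipping. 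Since $H_2$ is a connected spanning cycle, the labels then agree at every vertex of $V$.

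If every vertex is stable then $V_1$ is closed under $s$, forcing $V_1\in\{\emptyset,V\}$ because $\vec{H}_1$ is a single cycle, contradicting $V_1,V_2\neq\emptyset$. Hence every vertex is flipping, so $(V_1,V_2)$ is a bipartition of $\vec{H}_1$, and the ``all flipping'' branch of the above dichotomy also forces $\chi(u)\neq\chi(w)$ on every $H_2$-edge, so $(V_1,V_2)$ is simultaneously a bipartition of $H_2$. Thus $H_1\cup H_2$ would be bipartite, contradicting the hypothesis. Therefore $D$ is strongly connected, and any directed walk in $D$ from $v$ to $u$ lifts to the required alternating path. The main delicate point I expect to spell out carefully is the balance argument that promotes a failure of strong connectivity into a clean bipartition of $V$; once that is in hand, propagation of the stable/flipping dichotomy along the connected cycle $H_2$ is routine, and the non-bipartiteness hypothesis enters only to rule out the all-flipping alternative.
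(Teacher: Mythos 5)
Your proof is correct, and it takes a genuinely different route from the paper's. The paper fixes the source vertex $v$, colours every vertex by the type of alternating walk (if any) that reaches it from $v$ (red if left by an $\vec H_1$-edge, blue if left by an $H_2$-edge, red-blue if both, uncoloured otherwise), and then derives $|B|=|R|$ from a red-edge count and $d$-regularity of $H_2$; the three cases $|B|=|R|=0$, $0<|B|+|R|<|G|$, $|B|+|R|=|G|$ are then killed by connectivity of $H_1\cup H_2$, connectivity of $H_2$, and non-bipartiteness, respectively. You instead reformulate alternating reachability as ordinary reachability in the auxiliary digraph $D$ (arc $a\to b$ iff $\{a,s^{-1}(b)\}\in H_2$), observe that $D$ is Eulerian (equal in/out-degree, here using $d$-regularity of $H_2$), and use the Eulerian balance principle to convert failure of strong connectivity into a bipartition $(V_1,V_2)$ with no crossing arcs; the stable/flipping dichotomy propagates along the connected graph $H_2$ and the two uniform outcomes are ruled out by $\vec H_1$ being a single cycle and by non-bipartiteness of $H_1\cup H_2$. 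The two arguments invoke the same three hypotheses at structurally analogous points, but yours is global (no distinguished start vertex) and, once the digraph $D$ is introduced, reduces the combinatorics to a standard fact about Eulerian digraphs; the paper's is a local exploration argument whose counting step does the work your balance principle does. Both, as written, actually produce an alternating \emph{walk} of length $O(k)$ rather than a vertex-simple path, but this is exactly what the algorithm in Phase~6 needs and the paper uses the same bound $2(k-1)$, so the slight abuse of ``path'' is shared. Your argument also works verbatim under the paper's more general hypothesis that $H_2$ is any connected $d$-regular graph rather than a Hamilton cycle, since regularity is all the balance principle needs and connectivity is all the propagation step needs.
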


Denote by $u$ the end of $P'$ we want to switch and let $\tilde{G}_x$ be a fiber which we want to switch $u$ onto. Without loss of generality we may assume that the end $u$ belongs to the fiber $\tilde G_z$ above some vertex $\textbf{z}$. 
By Lemma~\ref{lemSwitch} in $\textbf{G}$ there exist a $\textbf{H}_2\vec{\textbf{H}}_1$-alternating path $\textbf{R}=\textbf{zb}_1\textbf{a}_1\textbf{b}_2\textbf{a}_2,..,\textbf{b}_{\ell-1}\textbf{a}_\ell \textbf{b}_\ell \textbf{x}$ from $\textbf{z}$ to $\textbf{x}$.

 Therefore we generate an edge from $u$ to a vertex $b^1 \in \tilde{G}_{b_1}$. The probability that adjacent vertex belongs to $Q_1$ equals $1/9k^2$. If $b^1$ is not a part of $Q_1$ then we stop, otherwise we use it as the pivot in P\'osa transformation that would change $P'$ into a path $P'_1$ which ends at a vertex $a^1 \in  \tilde{G}_{a_1}$. Then we continue the transformations in the same manner as in the first step. We reveal an edge from $a^1$ to a vertex $b^2 \in \tilde{G}_{b_2}$. Again with probability $1/9k^2$ vertex $b^2$ belongs to $Q_2$. Next we use $b^2$ as the pivot in P\'osa transformation that would change $P'_1$ into a path $P'_2$ which ends at a vertex $a^2$ from fiber above $\textbf{a}_2$. We apply the same operations until we get to a vertex $w \in \tilde{G}_x$. Notice since pivot for path $P_i$ is closer on a path $P_i$ to the vertex $w_i$ than the pivot used for path $P_{i-1}$ the transformation does not change the orientation of the sequences in the $Q_{i+1},..Q_{k-1}$. See Figure~3 for an example.
 
\begin{figure}[h!]
  \label{rysunek3}
  \centering
    \includegraphics[width=\textwidth]{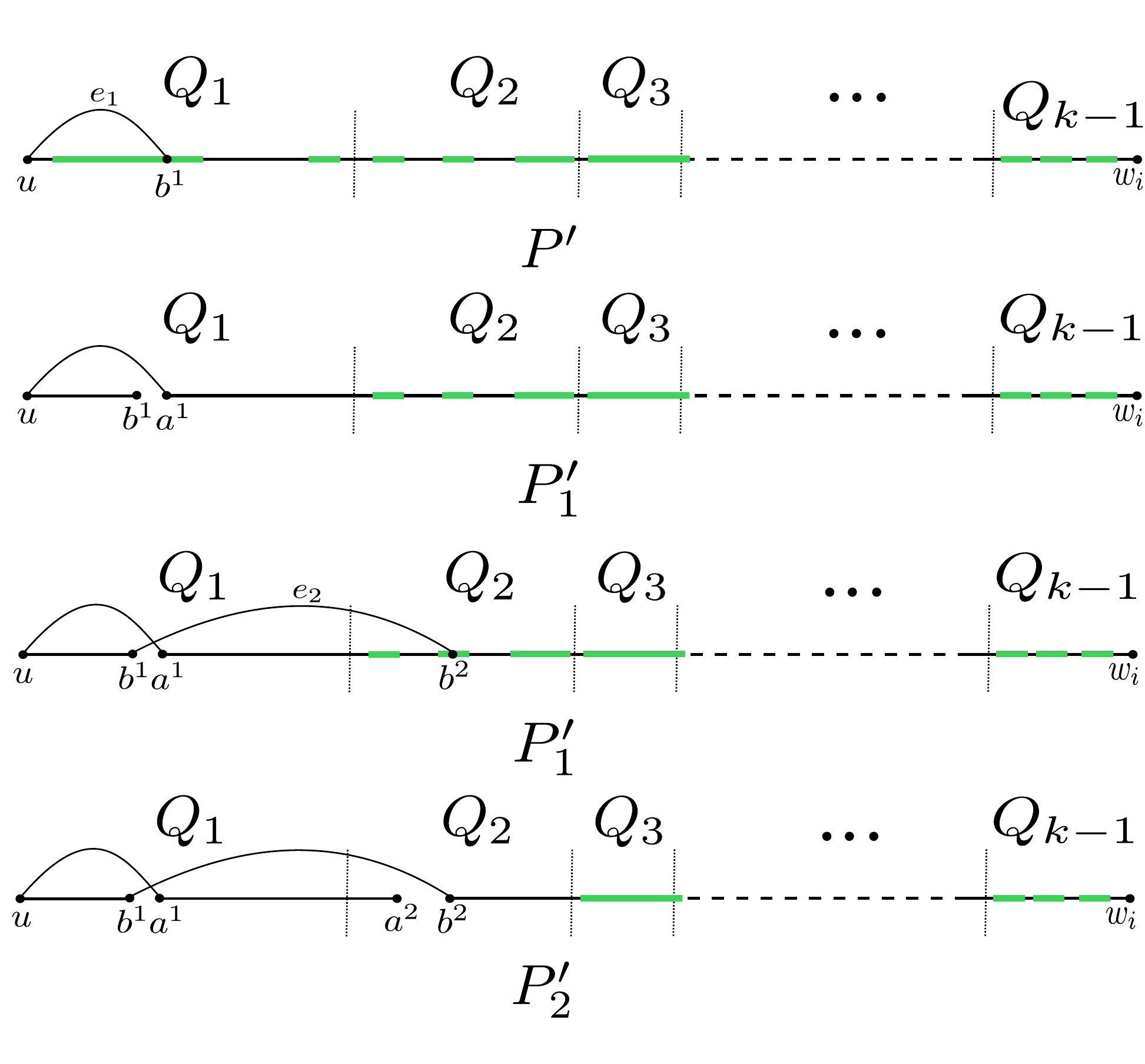}
      \caption{Two steps of the process of switching the end of path $P'$ onto desired fiber. Green sections indicates positively oriented sequences of vertices from fibers above consecutive vertices of the cycle $\textbf{H}_1$. The vertex $u$ is the end of path $P'$. The edge $e_1$ connects vertex $u$ with vertex $b^1$ from fiber $\tilde{G}_{b_1}$. The edge $e_2$ connects vertex $a^1$ with vertex $b^2$ from fiber $\tilde{G}_{b_2}$.}
\end{figure}

  Note that since the length of a $\textbf{H}_2\vec{\textbf{H}}_1$-alternating path is bounded by $2(k-1)$ during the process we have to generate at most $k-1$ edges (those which belongs to $\tilde{\textbf{H}}_2$). Hence, with probability at least $(9k^2)^{-k}>\xi>0$ we can move a given vertex from $S_1$, from any fiber to the designated fiber above vertex~$\textbf{x}$. The same analysis can be repeated in respect to the second path $P''$ and vertices from $S_2$ which we would like to place on fiber $\tilde{G}_y$. Since $|S_1|, |S_2|\ge n^{3/5}\log^2 n$, from Chernoff's bound we get that with probability at least $1-\exp(-n^{3/5})=1-o(1/\log n)$  we can successfully switch at least $n^{3/5}$ of them. Note that in this process we deactivated at most $2|\textbf{G}|n^{3/5}\log^2 n<n^{4/5}$ new vertices.

\smallskip

{\bf Phase 7.} 
Since $S'_1$ and $S'_2$ belong to different fibers which correspond to adjacent vertices from $G$ the probability that there are no edges
between $S'_1$ and $S'_2$ is bounded from above by 
\begin{align*}
\left(\frac{n-|S'_1|-|D|}{n-|D|} \right)^{|S'_2|}  \leq \Big(\frac{n-|S'_1|}{n}\Big)^{|S'_2|}
& \le \exp\Big(-\frac{|S'_1||S'_2|}{2n}\Big) \\
&  \le \exp(-n^{1/6}/2) = o(1/\log n)\, .
\end{align*}
Clearly, in the last phase we deactivated at most $|S'_1|\le n^{4/5}$ vertices.

This completes the analysis of the algorithm and the proof 
of Theorem~\ref{thm:main}.\hfil\qed

\section{Proof of the Lemma}

In this section we present the proof of the Lemma \ref{lemSwitch}.
In fact we shall show a slightly more general result.

\begin{lemma}
Let $\vec{H}_1$  be a directed Hamilton cycle and $H_2$ be a connected
$d$-regular graph, edge disjoint with $H_1$ which is such that $H = H_1 \cup H_2$ is not a bipartite graph. Then for every pair of vertices $v,u \in H$ there exists a $H_2\vec{H}_1$-alternating path from $v$ to $u$.
\end{lemma}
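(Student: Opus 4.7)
The plan is to rephrase the question as a reachability problem in an auxiliary directed graph, and then to prove strong connectivity of that digraph using the non-bipartite hypothesis.

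Let $\sigma$ denote the cyclic successor along $\vec{H}_1$, and define the auxiliary directed graph $D$ on $V(H)$ by drawing an arc $x \to y$ whenever $\{x,\sigma^{-1}(y)\} \in E(H_2)$. A consecutive $H_2$-edge followed by an $\vec{H}_1$-edge in $H$ is, tautologically, the same data as a single arc of $D$, so an $H_2\vec{H}_1$-alternating walk from $v$ to $u$ in $H$ exists if and only if there is a directed walk from $v$ to $u$ in $D$. Since $H_2$ is $d$-regular and $\sigma$ is a bijection, every vertex of $D$ has in-degree and out-degree equal to $d$. For such an in-out regular digraph, a standard flow-counting argument shows that each strong component is ``closed'' (no arcs going in or out), so strong connectivity of $D$ is equivalent to connectivity of its underlying undirected graph~$D^u$.

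I would prove that $D^u$ is connected by contradiction. Suppose there is a nontrivial partition $V(H) = X \sqcup Y$ with no $D^u$-edge crossing it, and let $f : V(H) \to \mathbb{Z}/2\mathbb{Z}$ be the indicator of $Y$. Each $H_2$-edge $\{a,b\}$ contributes the two $D^u$-edges $\{a,\sigma(b)\}$ and $\{b,\sigma(a)\}$, so the assumption forces $f(a) = f(\sigma(b))$ and $f(b) = f(\sigma(a))$ for every such edge. Consequently the function $h(v) := f(v) + f(\sigma(v))$ satisfies $h(a) = h(b)$ on every $H_2$-edge, so by connectedness of $H_2$ it is a constant $c \in \mathbb{Z}/2\mathbb{Z}$. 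Summing $h$ over $V(H)$ and using that $\sigma$ is a bijection gives $2\sum_v f(v) \equiv 0 \pmod 2$, so $nc \equiv 0 \pmod 2$ where $n = |V(H)|$.

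Now a two-case analysis finishes the argument. If $c = 0$, then $f$ is $\sigma$-invariant, hence constant on the single $\sigma$-orbit $V(H)$, contradicting that both $X$ and $Y$ are nonempty. If $c = 1$ (which forces $n$ even), then $f$ alternates along $\vec{H}_1$, and the condition $f(a) = f(\sigma(b))$ on each $H_2$-edge rewrites as $f(a) \neq f(b)$, so $f$ is simultaneously a proper $2$-coloring of $H_1$ and of $H_2$, exhibiting $H = H_1 \cup H_2$ as bipartite and contradicting the hypothesis. Either way we reach a contradiction, $D^u$ is connected, $D$ is strongly connected, and the required alternating walk from $v$ to $u$ exists. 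The heart of the proof, and the only place where non-bipartiteness of $H$ is really used, is the parity computation with $h$; everything else is bookkeeping around the definition of $D$ and the equivalence between strong and undirected connectivity in in-out regular digraphs.
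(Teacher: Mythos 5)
Your proof is correct but takes a genuinely different route from the paper's. The paper works directly in $H$: starting from $v$, it colours every vertex according to whether an alternating walk from $v$ can leave it by a blue ($H_2$) edge, by a red ($\vec{H}_1$) edge, by both, or has not reached it at all; three structural observations about which red and blue edges may run between the colour classes feed a double count of red in- and out-degrees that gives $|B|\ge |R|$, the $d$-regularity of $H_2$ then pins $|B|=|R|$, and the cases $|B|+|R|=0$, $0<|B|+|R|<|V(H)|$, $|B|+|R|=|V(H)|$ are killed by $v$ being coloured, connectivity of $H_2$, and non-bipartiteness respectively. You instead package the ``$H_2$-step then $\vec{H}_1$-step'' move as a single arc of an auxiliary digraph $D$, use that $D$ is in- and out-$d$-regular so that weak and strong connectivity coincide, and establish weak connectivity by a $\mathbb{Z}/2$ argument: if $f$ indicates one side of a cut with no crossing $D^u$-edge, then $h=f+f\circ\sigma$ is constant on $H_2$-edges (hence constant by connectivity of $H_2$), and the two possible values of that constant correspond respectively to the cut being trivial (via transitivity of $\sigma$ on $V(H)$) and to $f$ being a proper two-colouring of both $H_1$ and $H_2$. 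This reduction is shorter and makes transparent exactly where each hypothesis is used. Two small remarks. First, the computation $\sum_v h(v)\equiv 0\pmod 2$, i.e.\ $nc\equiv 0$, is a pleasant sanity check but not logically needed: the dichotomy on $c\in\{0,1\}$ already closes both branches. Second, strong connectivity of $D$ gives a directed path in $D$, hence an alternating \emph{walk} in $H$ that need not be vertex-simple in $H$ even though the lemma is phrased for a ``path''; this same looseness is present, implicitly, in the paper's colouring construction, and the application in Phase~6 really only requires the length bound $2(k-1)$, which a $D$-path on $k$ vertices already supplies, so nothing is lost, but it would be cleaner to either state the lemma for walks or add a word on extracting a path.
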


\begin{proof}
For the purpose of the proof let color edges of $\vec{H}_1$ \textit{red} and edges of $H_2$ \textit{blue}. We proceed in following way: starting from vertex $v$ we build $H_2\vec{H}_1$-alternating paths to other vertices. We mark vertices in $H$ \textit{red} if we leave this vertex by a red edge and respectively \textit{blue} we leave this vertex by a blue edge (vertices that admits both colors are denoted as red-blue). Notice that it means that vertex $v$ gets blue color. 
Notice that $H_2\vec{H}_1$-alternating path is now equivalent to a BlueRed-alternating path, starting with a blue edge and ending with a red edge (that is, ending in a blue or red-blue vertex). 

Denote by $N$ the set of vertices that are not reached 
from vertex $v$ by $H_2\vec{H}_1$ alternating path (that did not get any color) and by $R$, $B$, $RB$ the sets of vertices of appropriate colors.
Let us make the following observations.

\begin{enumerate}
\item[(i)] There are no directed red edges $\{xy\}$ from $x \in R \cup RB$ to $y \in N$, because that would imply $y \in B$. The same argument shows that there are no blue edges between $RB$ and $N$. 

\item[(ii)] There is in total at most one directed red edge $\{xy\}$ coming from  $N$ to $B \cup RB$, or coming from sets $B$ to $RB$, or within the set $B$. Indeed,  apart from the starting vertex $v$ in order to color vertex blue we have to first reach it by a red edge 
(coming to it from a red vertex). 

\item[(iii)] There are no blue edges $\{xy\}$ between $x \in RB$ and $y \in B$, since in this case we would be able to reach $x$ by red edge, use blue edge to get to $y$ and then leave $y$ by red edge, which results in  $y \in RB$. The same argument shows that there are no red edges directed from $RB$ to $R$, no blue edges inside $B$, and no red edges inside $R$. 
\end{enumerate}

Figure $4$ shows all the possible  edges which can occur 
between sets $R$, $B$, $RB$ and $N$.

\begin{figure}[h!]
\label{LemmaRys}
  \centering
    \includegraphics[width=\textwidth]{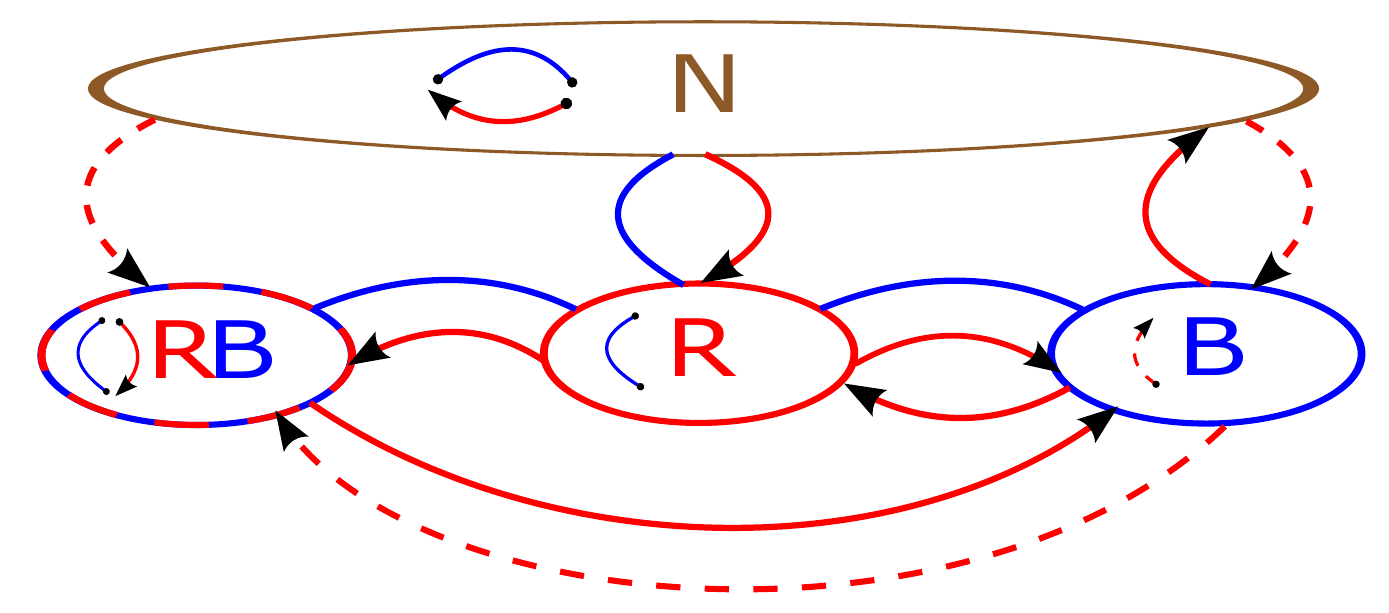}
      \caption{Diagram of edges between vertices in $H = H_1 \cup H_2$ created by following $H_2\vec{H}_1$-alternating paths from some  vertex $v \in H$. Edges of Hamilton cycle $H_1$ are directed and coloured red, and edges of Hamilton cycle $H_2$ are coloured blue.
			There are at most one red edge in total in places denoted by red dashed arrows.}
\end{figure}

Denote by $|X \rightarrow Y|$  the number of red edges coming from the set $X$ to the set $Y$. 
Since the red edges form the directed Hamilton cycle $\vec{H}_1$
in $G$ there is exactly one red edge coming out and one red edge coming in to every vertex in $G$. Thus, we can estimate the size of 
$R$  counting red edges incoming to it. Therefore 
$$ |R| = |N\rightarrow R| + |B \rightarrow R|. $$
In a similar way (counting red edges outgoing from vertices), we have
$$ |B| = |B \rightarrow N| + |B \rightarrow R| + |B \rightarrow RB| + |B \rightarrow B| $$
The red edges form a directed Hamilton cycle, thus the number of red edges coming in to the set $N$ equals to the number of red edges coming out from the set $N$. Furthermore,  by (ii), we have 
$$|N\to B|+|N\to RB|+|B\to RB|+|B\to B|\le 1\,.$$
Hence
$$|B \rightarrow N| + |B \rightarrow RB| + |B \rightarrow B| \geq |N\rightarrow R|\,$$
which implies $|B| \geq |R|$.

Each blue and red vertex is incident to exactly $d$ blue edges. Notice that all blue edges that are leaving $B$ go to the vertices in $R$, which, together with $|B| \geq |R|$, implies that $|B| = |R|$. 
Let us consider now three possible sizes of set $|B|=|R|$.

\medskip

{\sl Case 1.}\quad $|B|=|R|=0$.

\smallskip 

Then all vertices of $G$ belong to $RB\cup N$, and since 
both $H_1$ and $H_2$ are connected graphs we must have $N=\emptyset$.
Consequently, all vertices are colored with both colors and the
assertion follows.

\smallskip

{\sl Case 2.}\quad $0<|B|+|R|<|G|$.

\smallskip 

Then the blue edges between $R$ and $B$ induce the 
$d$-regular subgraph of $H_2$, which is clearly a
component of $H_2$. This contradicts the fact that 
$H_2$ is connected. 

\smallskip

{\sl Case 3.}\quad $|B|+|R|=|G|$.

\smallskip 

In this case  $G$ is a bipartite graph in which red vertices form one part of the partition and blue vertices form the other part, which contradicts the assumption of the lemma. 

This concludes the proof of the lemma.
\end{proof}

\bibliographystyle{plain}


\end{document}